\newtheorem{theorem}{Theorem}[section]
\newtheorem{lemma}[theorem]{Lemma}
\theoremstyle{definition}
\newtheorem{example}[theorem]{Example}
\theoremstyle{remark}
\newtheorem{remark}[theorem]{Remark}
\numberwithin{equation}{section}
\newcommand{\R}{\mathbb R}
\def\e{\varepsilon}
\definecolor{vert}{rgb}{0,0.4,0}
\begin{document}

\begin{center}
{\Large Explicit formulas for $C^{1,1}$ Glaeser-Whitney extensions \medskip
of $1$-Taylor fields in Hilbert spaces.}
\end{center}

\smallskip

\begin{center}
\textsc{Aris Daniilidis, Mounir Haddou, Erwan Le Gruyer, Olivier Ley}
\end{center}

\medskip

\noindent\textbf{Abstract.} We give a simple alternative proof for the
$C^{1,1}$--convex extension problem which has been introduced and studied by
D.~Azagra and C.~Mudarra \cite{am17}. As an application, we obtain an easy
constructive proof for the Glaeser-Whitney problem of $C^{1,1}$ extensions on
a Hilbert space. In both cases we provide explicit formulae for the extensions.
For the Glaeser-Whitney problem the obtained extension is almost minimal,
that is, minimal up to a multiplicative factor in the sense of Le Gruyer~\cite{legruyer09}.

\bigskip

\noindent\textbf{Key words.} Whitney extension problem, convex extension,
sup-inf convolution, semiconvex function.

\vspace{0.6cm}

\noindent\textbf{AMS Subject Classification} \ \textit{Primary} 54C20 ;
\textit{Secondary} 52A41, 26B05, 26B25, 58C25.

\section{Introduction}

Determining a function (or a class of functions) of a certain regularity
fitting to a prescribed set of data is one of the most challenging problems in
modern mathematics. The origin of this problem is very old, since this
general framework encompasses classical problems of applied analysis.
Depending on the requested regularity, it
goes from the Tietze extension theorem in normal topological spaces, where the
required regularity is minimal (continuity), to results where the requested
regularity is progressively increasing: McShane results on uniformly
continuous, H\"{o}lder or Lipschitz extensions~\cite{mcshane34}, Lipschitz
extensions for vector-valued functions (Valentine~\cite{Valentine45}),
differentiable and $C^{k}$-extensions (Whitney~\cite{whitney34},
Glaeser~\cite{glaeser58}, and more recently Brudnyi-Shvartsman~\cite{bs01},
Zobin~\cite{zobin98}, Fefferman~\cite{fefferman05}),
monotone multivalued extensions (Bauschke-Wang~\cite{bw10}), definable
(in some o-minimal structure) Lipschitz extensions (Aschenbrenner-Fischer~\cite{af11}), etc.
In this work
we are interested in the Glaeser-Whitney $C^{1,1}$-extension problem,
which we describe below.\smallskip

Let $S$ be a nonempty subset of a Hilbert space $(\mathcal{H},\langle \cdot,\cdot\rangle, |\cdot|)$ and assume
$\alpha:S\rightarrow\mathbb{R}$ and $v:S\rightarrow\mathcal{H}$ satisfy the
so-called Glaeser-Whitney conditions:

\begin{eqnarray}
  \label{cond-GW}
\left\{
\begin{array}
[c]{l}
\displaystyle \mathop{\rm sup}_{s_1,s_2\in S, s_1\not= s_2}
\frac{|\alpha(s_{2})-\alpha(s_{1})-\langle v(s_{1}),s_{2}-s_{1}\rangle|}{|s_{1}-s_{2}|^{2}}
:= K_1 <+\infty,   \bigskip 
\\
\displaystyle \mathop{\rm sup}_{s_1,s_2\in S, s_1\not= s_2}
\frac{|v(s_{1})-v(s_{2})|}{|s_{1}-s_{2}|}:= K_2<+\infty. 
\end{array}
\right.
\end{eqnarray}

In \cite{whitney34, glaeser58} it has been shown that under the above
conditions, in case $\mathcal{H}=\mathbb{R}^{n},$ there exists a $C^{1,1}$-smooth
function $F:\mathbb{R}^{n}\rightarrow\mathbb{R}$ such that the
prescribed $1$-Taylor field $(\alpha(s),v(s))$ coincides, at every
$s\in S$, with the $1$-Taylor field $(F(s),\nabla F(s))$ of $F$. The above
result has been extended to Hilbert spaces in Wells~\cite{wells73}
and Le~Gruyer~\cite{legruyer09}. In particular, in~\cite{legruyer09} the following constant
has been introduced:
\begin{eqnarray}\label{gamma1}
&& \Gamma^{1}(S,(\alpha,v)):= \mathop{\rm sup}_{s_1,s_2\in S, s_1\not= s_2} \left(\sqrt{A_{s_1 s_2}^2+B_{s_1 s_2}^2}+|A_{s_1 s_2}|\right),
\end{eqnarray}
where
\begin{eqnarray*}
A_{s_1 s_2}=\frac{2(\alpha(s_1)-\alpha(s_2))+\langle v(s_1)+v(s_2), s_2-s_1\rangle} {|s_1-s_2|^2},
\quad
B_{s_1 s_2}=\frac{v(s_1)-v(s_2)}{|s_1-s_2|}.
\end{eqnarray*}
It has been shown in~\cite{legruyer09} that $\Gamma^{1}(S,(\alpha,v))<+\infty$ if and only if conditions~\eqref{cond-GW}
hold. Moreover, in this case, the existence of a $C^{1,1}$ function $F:\mathcal{H}\rightarrow\mathbb{R}$
such that $F|_{S}=\alpha,$ $\nabla F|_{S}=v$ and
\begin{equation}
\Gamma^{1}(\mathcal{H},(F,\nabla F))=\Gamma^{1}(S,(\alpha,v)),\label{1}
\end{equation}
has been established. Henceforth, every $C^{1,1}$-extension of $(\alpha,v)$ satisfying \eqref{1} will be called a minimal
Glaeser-Whitney extension. The terminology is justified by the fact
that, for every $C^{1,1}$ function $G:\mathcal{H}\rightarrow\mathbb{R},$
we have $\Gamma^{1}(\mathcal{H},(G,\nabla G))=\mathrm{Lip}(\nabla G)$ (see \cite[Proposition 2.4]{legruyer09}). Thus $\mathrm{Lip}(\nabla F)\leq \mathrm{Lip}(\nabla G)$
for any $C^{1,1}$-extension $G$ of the prescribed $1$-Taylor field $(\alpha(s),v(s))$. If for some universal constant $K \geq 1$ (not depending on the data) we have $\Gamma^{1}(\mathcal{H},(G,\nabla G)) \leq K \, \Gamma^{1}(S,(\alpha,v))$ then the extension $G$ will be called almost minimal. \smallskip

Recently, several authors have been interested in extensions that are subject to additional constraints:
extensions which preserve positivity~\cite{fil17, fil17bis} or convexity~\cite{am17, am17bis}.
In \cite{am17}, D. Azagra and C. Mudarra considered the problem
of finding a {\em convex} $C^{1,1}$-smooth extention over a prescribed Taylor
polynomial $(\alpha(s),v(s))_{s\in S}$ in a Hilbert space
$\mathcal{H}$ and established that the condition
\begin{equation}
\alpha(s_{2})\geq\alpha(s_{1})+\langle v(s_{1}),s_{2}-s_{1}\rangle+\frac
{1}{2M}|v(s_{1})-v(s_{2})|^{2},\quad\forall s_{1},s_{2}\in
S,\label{form-azagra}
\end{equation}
is necessary and sufficient for the existence of such extension.\smallskip

Inspired by the recent work~\cite{am17}
concerning $C^{1,1}$-convex extensions, we revisit the classical
Glaeser-Whitney problem. We first
provide an alternative shorter proof of the result of~\cite{am17}
concerning $C^{1,1}$-convex extensions in Hilbert spaces
by giving a simple explicit formula. This formula is heavily based
on the regularization via sup-inf convolution in the spirit of Lasry-Lions~\cite{ll86}
and can be efficiently  computed, see Remark~\ref{rmk-num}.
As an easy consequence, we obtain 
a direct proof for the classical
$C^{1,1}$-Glaeser-Whitney problem in Hilbert spaces, which goes together
with an explicit formula of the same type as for the convex extension problem.
Let us mention that the previous proofs are quite involved
  both  in finite dimension~\cite{whitney34,glaeser58}
  and Hilbert spaces~\cite{wells73, legruyer09}.
In the finite dimensional case, a construction of the extension is proposed
  in~\cite{wells73} and some  explicit formulae can be found in~\cite{lgp15}
  but both are not tractable (see however the work~\cite{hhm17} for
  concrete computations).
Our approach also compares favorably to the result of~\cite{legruyer09}, in which the existence
of minimal extensions is established.
On the other hand, the extension given by our explicit formula may fail to be minimal
---though it is almost minimal up to a universal multiplicative factor.
\smallskip

Before we proceed, we recall that a function $f:\mathcal{H}
\rightarrow\mathbb{R}$ is called $C_{\ast}$-semiconvex (resp., $C^{\ast}$-semiconcave) when, for all $x,y\in H,$
\[
f(y)-f(x)-\langle\nabla f(x),y-x\rangle\geq-\frac{C_{\ast}}{2}|x-y|^{2}
\ \ \text{(resp., $\displaystyle\leq\frac{C^{\ast}}{2}|x-y|^{2}$).}
\]
This is equivalent to assert that $f+\frac{C_{\ast}}{2}|x|^{2}$ is convex
(respectively $f-\frac{C^{\ast}}{2}|x|^{2}$ is concave). When $f$ is both $C
$-semiconvex and $C$-semiconcave, then $f$ is $C^{1,1}$ in $\mathcal{H}$ with
$\mathrm{Lip}(\nabla f)\leq C$ (for a proof of this latter result in finite
dimension, see~\cite{cs04} and use the arguments of~\cite{ll86} to extend the
result to Hilbert spaces).~~~
\smallskip


\section{Convex $C^{1,1}$ extension of $1$-fields}
\label{ss-2}

For any $f:\mathcal{H}\rightarrow\mathbb{R}$ and $\varepsilon>0,$ we define
respectively the sup and the inf-convolution of $f$ by
\[
f^{\varepsilon}(x)=\mathop{\rm sup}_{y\in\mathcal{H}}\left \{ f(y)-\frac{|y-x|^{2}}{2\varepsilon} \right \},\quad f_{\varepsilon}(x)=\mathop{\rm inf}_{y\in\mathcal{H} }\left \{f(y)+\frac{|y-x|^{2}}{2\varepsilon} \right \}.
\]

\begin{theorem}
[C$^{1,1}$-convex extension]\label{thm-main-1} Let $S$ be any nonempty subset
of the Hilbert space $\mathcal{H}$ and $(\alpha(s),v(s))_{s\in S}$ be a
1-Taylor field on $S$ satisfying~\eqref{form-azagra} for some constant $M>0.$
Then
\begin{equation}
f(x)=\mathop{\rm sup}_{s\in S}\;\{\alpha(s)+\langle v(s),x-s\rangle
\}\label{fcvx}
\end{equation}
is the smallest continuous convex extension of $(\alpha,v)$ in $\mathcal{H}$
and
\begin{equation}
F(x)=
\mathop{\rm lim}_{\varepsilon\nearrow\frac{1}{M}}
(f^{\varepsilon})_{\varepsilon}(x)
=
\mathop{\rm lim}_{\varepsilon\nearrow\frac{1}{M}}
\mathop{\rm inf}_{z\in\mathcal{H}}\mathop{\rm sup}_{y\in\mathcal{H}
}\{f(y)-\frac{|y-z|^{2}}{2\varepsilon}+\frac{|z-x|^{2}}{2\varepsilon
}\}\label{ext-C11-f}
\end{equation}
is a $C^{1,1}$ convex extension of $(\alpha,v)$ in
$\mathcal{H}.$ Moreover, $\mathrm{Lip}(\nabla F)\leq M.$
\end{theorem}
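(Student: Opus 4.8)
The plan is to first establish that $f$ defined by \eqref{fcvx} is a genuine convex extension — i.e. that $f(s)=\alpha(s)$ and $\nabla f(s)=v(s)$ for each $s\in S$ — and then to run the sup-inf convolution machinery on $f$, carefully tracking which regularity is gained at each stage. For the first part, note that $f$ is a supremum of affine functions, hence convex and lower semicontinuous; the inequality \eqref{form-azagra} with $s_1=s$ and $s_2$ arbitrary (and also the reverse role) forces $f(s)=\alpha(s)$, because $\alpha(s)+\langle v(s),s-s\rangle = \alpha(s)$ is one of the competitors, and \eqref{form-azagra} shows no other competitor $\alpha(s')+\langle v(s'),s-s'\rangle$ exceeds $\alpha(s)$. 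A short convexity argument (or a direct estimate using \eqref{form-azagra}) then shows $v(s)$ is a subgradient of $f$ at $s$, and in fact the unique one, giving differentiability with $\nabla f(s)=v(s)$. Minimality among continuous convex extensions is immediate: any convex $g$ with $g(s)=\alpha(s)$, $\nabla g(s)=v(s)$ lies above each of its tangent planes, hence above each affine function $\alpha(s)+\langle v(s),\cdot-s\rangle$, hence above $f$.

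The second and main part is to show $F=\lim_{\e\nearrow 1/M}(f^{\e})_{\e}$ is $C^{1,1}$ convex, still extending $(\alpha,v)$, with $\mathrm{Lip}(\nabla F)\le M$. The key structural facts about Lasry-Lions regularization that I would invoke or reprove are: (i) since $f$ is bounded below by an affine function but may grow, one must check $f^\e$ is finite-valued — here \eqref{form-azagra} is exactly what is needed, because it controls how fast $f$ can grow and guarantees $f^\e(x)<\infty$ precisely for $\e<1/M$; in fact \eqref{form-azagra} is equivalent to $f$ being $M$-semiconcave-from-below in a suitable sense. (ii) For $\e<1/M$, $f^\e$ is finite, and being a sup of the $\e^{-1}$-concave functions $y\mapsto f(y)-|y-x|^2/(2\e)$ shifted appropriately, $f^\e$ is $\frac{1}{\e}$-semiconcave; more importantly, because $f$ itself is $M$-semiconcave, a computation shows $f^\e$ is $\frac{M}{1-\e M}$-semiconcave. (iii) The inf-convolution step: $(f^\e)_\e$ is always $\frac1\e$-semiconvex (standard, the inf-convolution with $|\cdot|^2/2\e$ of any function is $\frac1\e$-semiconvex), and crucially for $f^\e$ already $\e^{-1}$-semiconcave in the right way, $(f^\e)_\e$ preserves semiconcavity — in fact one shows $(f^\e)_\e$ is $\frac{M}{1-\e M}$-semiconcave as well. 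So $(f^\e)_\e$ is both $\frac1\e$-semiconvex and $\frac{M}{1-\e M}$-semiconcave; hence $C^{1,1}$ with $\mathrm{Lip}(\nabla (f^\e)_\e)\le\max\{\frac1\e,\frac{M}{1-\e M}\}$. Letting $\e\nearrow 1/M$, both bounds tend to $M$, which is where the factor $M$ in the conclusion comes from.

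The remaining points are convergence and preservation of the data. Since $f$ is convex, $f^\e\searrow f$ pointwise as $\e\searrow 0$ and $(f^\e)_\e$ is sandwiched: $f^\e \ge (f^\e)_\e$ and, because $f$ is convex, $(f^\e)_\e \ge f$ (inf-convolution of $f^\e\ge f$ dominates $f_\e=f$ by convexity of $f$). Thus $f\le (f^\e)_\e\le f^\e$, and since $f^\e\to f$ as $\e\searrow 0$ — but here $\e\nearrow 1/M$, so instead I would argue monotonicity in $\e$ of $(f^\e)_\e$ and use the uniform semiconcavity/semiconvexity bounds (which stay finite as long as we do not exceed $1/M$ but blow up only in the limit) together with an Arzelà–Ascoli / equi-Lipschitz-on-bounded-sets argument to pass to the limit, obtaining that $F$ is well-defined, convex, and $C^{1,1}$ with $\mathrm{Lip}(\nabla F)\le M$. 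Preservation of the Taylor data follows from the sandwich $f\le (f^\e)_\e\le f^\e$ evaluated at $s\in S$: since $f(s)=\alpha(s)$ and one checks $f^\e(s)\to\alpha(s)$ (again using \eqref{form-azagra} to control the sup near $s$), we get $F(s)=\alpha(s)$; and since $F\ge f$ with equality at $s$ and both are differentiable there, $\nabla F(s)=\nabla f(s)=v(s)$.

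The step I expect to be the main obstacle is (ii)–(iii): proving that the $M$-semiconcavity of $f$ is \emph{propagated through both convolutions} with the sharp constant $\frac{M}{1-\e M}$, rather than merely the crude $\frac1\e$ bound. This requires a careful composition-of-quadratics computation — substituting the definition of $f^\e$ into the definition of $(f^\e)_\e$ and completing the square to see the effective regularization parameter is $\frac{\e}{1-\e M}\cdot\frac{1-\e M}{\e}$-type combination — and then checking that the limit $\e\nearrow 1/M$ is exactly the threshold where semiconvexity and semiconcavity constants coincide at $M$. Handling the limit rigorously (well-posedness of $F$, interchanging limit with differentiation) via the uniform $C^{1,1}$ bounds is the other technical point, though it is routine once the semiconvexity/semiconcavity estimates are in hand.
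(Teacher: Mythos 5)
Your overall architecture (trap $f$ between its tangent affine functions and touching quadratics, show $(f^\varepsilon)_\varepsilon$ is simultaneously semiconvex and semiconcave with constants tending to $M$, pass to a monotone limit) is the same as the paper's, but two of your key steps fail as stated. First, your mechanism for showing that $F$ still interpolates the data is the sandwich $f\le (f^\varepsilon)_\varepsilon\le f^\varepsilon$ together with the claim that $f^\varepsilon(s)\to\alpha(s)$. This is false for the relevant limit $\varepsilon\nearrow 1/M$: since $f^\varepsilon=\sup_{s'}\{\ell_{s'}+\tfrac{\varepsilon}{2}|v(s')|^2\}$ with $\ell_{s'}(x)=\alpha(s')+\langle v(s'),x-s'\rangle$, one has $f^\varepsilon(s)\ge\alpha(s)+\tfrac{\varepsilon}{2}|v(s)|^2>\alpha(s)$ whenever $v(s)\neq 0$, and this excess only grows with $\varepsilon$. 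The missing ingredient is the exact identity $(q^\varepsilon)_\varepsilon=q$ for the touching quadratic $q(x)=\alpha(s)+\langle v(s),x-s\rangle+\tfrac{M}{2}|x-s|^2$ (the paper's \eqref{form-cruciale}); combined with the order-preservation of both convolutions and the bound $\ell_s\le f\le q$, it yields the sharp sandwich $\ell_s\le(f^\varepsilon)_\varepsilon\le q$ with equality at $s$, which is what actually gives $(f^\varepsilon)_\varepsilon(s)=\alpha(s)$, $\nabla(f^\varepsilon)_\varepsilon(s)=v(s)$, and the same for $F$ after the monotone limit.

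Second, the regularity bookkeeping in your steps (ii)--(iii) is wrong in a way that cannot be patched as written. You have semiconvexity and semiconcavity systematically swapped: the sup-convolution $f^\varepsilon$ is $\varepsilon^{-1}$-semi\emph{convex} (not semiconcave), the inf-convolution is $\varepsilon^{-1}$-semi\emph{concave}, and the inf-convolution of a general function is \emph{not} semiconvex (take $f(x)=-|x|$, whose Moreau envelope retains the downward kink). Moreover $f$ is a supremum of affine functions, hence has upward kinks and is not $M$-semiconcave, so the ``propagation of the $M$-semiconcavity of $f$ through both convolutions'' that you single out as the main obstacle rests on a false premise; likewise $f_\varepsilon\neq f$ for convex $f$ (the inequality $f\le(f^\varepsilon)_\varepsilon$ is true, but it holds for \emph{any} $f$ by taking $y=x$ in the inner supremum). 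The correct mechanism, and the one the paper uses, is: $(f^\varepsilon)_\varepsilon$ is $\varepsilon^{-1}$-semiconcave \emph{automatically}, being an inf-convolution, and it is \emph{convex} (hence trivially $\varepsilon^{-1}$-semiconvex) because $f^\varepsilon$ is again a supremum of affine functions and inf-convolution preserves convexity; letting $\varepsilon\nearrow 1/M$ then gives $M$-semiconcavity plus convexity of $F$, whence $F$ is $C^{1,1}$ with $\mathrm{Lip}(\nabla F)\le M$.
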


\begin{remark}\label{rmk-num}\  (i) The function $f$ given by~\eqref{fcvx} is the smallest convex continuous extension of  $(\alpha,v)$
in the following sense: if $g$ is a continuous convex function in $\mathcal{H}$, differentiable on $S$, satisfying $g(s)=\alpha(s)$ and $\nabla g(s)=v(s)$, for all $s \in S$, then $f\leq g.$ \smallskip 

\noindent (ii) As we shall see in the forthcoming proof, $\varepsilon\mapsto(f^{\varepsilon})_{\varepsilon}$ is
nondecreasing. Therefore, ``$\mathop{\rm lim}_{\varepsilon\nearrow\frac{1}{M}}$'' can be replaced by
``$\mathop{\rm sup}_{\varepsilon\in(0,\frac{1}{M})}$'' in formula~\eqref{ext-C11-f}. \smallskip

\noindent (iii) The inf-convolution corresponds to the well-known Moreau-Yosida  re\-gu\-la\-rization in convex analysis.
It is also related to the Legendre-Fenchel transform (convex conjugate). A discussion on theoretical and practical properties of this regularization can be found in~\cite{clcs97}  and references therein. In practice, $f_{\varepsilon}$, $f^{\varepsilon}$ and therefore the formula~\eqref{ext-C11-f} can be very
efficiently computed using different techniques and algorithms such as~\cite{BH09} or~\cite{yL06}.
\end{remark}

\begin{proof}[Proof of Theorem~\ref{thm-main-1}]
For all $x\in\mathcal{H}$ and $s_{1},s_{2}\in S,$
by~\eqref{form-azagra}, we have
\begin{align*}
&  \alpha(s_{1})+\langle v(s_{1}),x-s_{1}\rangle\\
&  \leq\alpha(s_{2})+\langle v(s_{2}),x-s_{2}\rangle+\langle v(s_{1}
)-v(s_{2}),x-s_{2}\rangle-\frac{1}{2M}|v(s_{1})-v(s_{2})|^{2}\\
&  \leq\alpha(s_{2})+\langle v(s_{2}),x-s_{2}\rangle+\mathop{\rm sup}_{\xi
\in\mathcal{H}}\{\langle\xi,x-s_{2}\rangle-\frac{1}{2M}|\xi|^{2}\}\\
&  =\alpha(s_{2})+\langle v(s_{2}),x-s_{2}\rangle+\frac{M}{2}|x-s_{2}|^{2}.
\end{align*}
It follows that for all $x\in\mathcal{H}$ and $s\in S$
\begin{equation}\label{f-line-quad}
\alpha(s)+\langle v(s),x-s\rangle\leq f(x)\leq\alpha(s)+\langle
v(s),x-s\rangle+\frac{M}{2}|x-s|^{2}.
\end{equation}
In particular, the function $f$ defined by \eqref{fcvx} is convex, finite in
$\mathcal{H}$ and trapped between affine hyperplanes and quadratics with
equality on $S$. Therefore, it is differentiable on $S$ with $f(s)=\alpha(s)$,
$\nabla f(s)=v(s)$ and it is clearly the smallest continuous
convex extension of the field.\smallskip

Setting $q(x)=\alpha(s)+\langle v(s), x-s\rangle+\frac{M}{2}|x-s|^{2},$ for
$\varepsilon\in(0,M^{-1}),$ straightforward computations lead to the formulae:
\begin{align}
&  q^{\varepsilon}(x)=\alpha(s)+\frac{1}{1-\varepsilon M}\left(  \frac{M}{2}
  |x-s|^{2}+\langle v(s),x-s\rangle+\frac{\varepsilon}{2}|v(s)|^{2}\right),
\label{sup-conv-quad}\\
&  q_{\varepsilon}(x)=  \alpha(s)+\frac{1}{1+\varepsilon M}\left(  \frac{M}{2}|x-s|^{2}
+\langle v(s),x-s\rangle-\frac{\varepsilon}{2}|v(s)|^{2}\right).\nonumber
\end{align}
In particular, after a new short computation, we deduce that
\begin{equation}\label{form-cruciale}
(q^{\varepsilon})_{\varepsilon}=q,
\end{equation}
and from~\eqref{f-line-quad}, since the sup and inf-convolution are
order-preserving operators, we obtain that for every $\varepsilon\in
(0,M^{-1}),$ $x\in\mathcal{H}$ and $s\in S,$
\begin{equation}
\alpha(s)+\langle v(s),x-s\rangle\leq(f^{\varepsilon})_{\varepsilon}
(x)\leq\alpha(s)+\langle v(s),x-s\rangle+\frac{M}{2}|x-s|^{2}.\label{fepseps}
\end{equation}
It follows that $(f^{\varepsilon})_{\varepsilon}$ is well-defined on
$\mathcal{H}$. Notice also that
\begin{equation}
f\leq (f^{\varepsilon})_{\varepsilon}\qquad\text{ in $\mathcal{H}$
}\label{ordre-sup-inf}
\end{equation}
and that $(f^{\varepsilon})_{\varepsilon}$ is differentiable on $S$ with
$(f^{\varepsilon})_{\varepsilon}(s)=\alpha(s)$ and $\nabla(f^{\varepsilon})_{\varepsilon}(s)=v(s)$,
for every $s\in S$.\smallskip

Notice that since $f$ is defined as the supremum of the affine functions
$\ell_s(x)=\alpha(s)+\langle v(s),x-s\rangle$ and
$\ell_s^{\varepsilon}(x)=\ell_s(x)+\frac{\varepsilon}{2}|v(s)|^{2}$ by~\eqref{sup-conv-quad},
we have
\[
f^{\varepsilon}(x)=\mathop{\rm sup}_{s\in S}\{\ell_s^{\varepsilon}(x)\},
\]
which proves that $f^{\varepsilon}$ is convex. Therefore, $(f^{\varepsilon
})_{\varepsilon}$ is still convex, being the infimum with respect to $y$ of the
jointly convex functions
\[
f^{\varepsilon}(y)+\frac{1}{2\varepsilon}|y-x|^{2},\quad(x,y)\in
\mathcal{H}\times\mathcal{H}.
\]
It is well-known~\cite{ll86} that the sup and inf-convolution satisfy some
semigroup properties,
\[
f^{\varepsilon+\varepsilon^{\prime}}=(f^{\varepsilon})^{\varepsilon^{\prime}
}\text{ and }f_{\varepsilon+\varepsilon^{\prime}}=(f_{\varepsilon
})_{\varepsilon^{\prime}}\text{ for all $\varepsilon,\varepsilon^{\prime}>0.$}
\]
Therefore, for $0<\varepsilon<\varepsilon^{\prime},$ $f^{\varepsilon^{\prime}
}=(f^{\varepsilon})^{\varepsilon^{\prime}-\varepsilon}.$
By~\eqref{ordre-sup-inf}, we infer $((f^{\varepsilon})^{\varepsilon^{\prime
}-\varepsilon})_{\varepsilon^{\prime}-\varepsilon}\geq f^{\varepsilon}.$ It
follows
\[
((f^{\varepsilon^{\prime}})_{\varepsilon^{\prime}-\varepsilon})_{\varepsilon
}=(f^{\varepsilon^{\prime}})_{\varepsilon^{\prime}}\geq(f^{\varepsilon
})_{\varepsilon}\quad\text{for all $0<\varepsilon<\varepsilon^{\prime}.$}
\]

We conclude that $\varepsilon\mapsto(f^{\varepsilon})_{\varepsilon}$ is
nondecreasing on $(0,M^{-1})$ so $F$ is well defined, convex and still
satisfies~\eqref{fepseps}. Therefore $F$ is an extension of $(\alpha
(s),v(s))_{s\in S}$ in $\mathcal{H}$ and is differentiable on $S.$\smallskip

It remains to prove that $F$ is $C^{1,1}$ in $\mathcal{H}$ and to estimate
$\mathrm{Lip}(\nabla F).$ From~\cite{ll86}, we know that the inf-convolution
$(f^{\varepsilon})_{\varepsilon}$ of $f^{\varepsilon}$ is $\varepsilon^{-1}
$-semiconcave. Since $(f^{\varepsilon})_{\varepsilon}$ is also convex, it
means that $(f^{\varepsilon})_{\varepsilon}$ is both $\varepsilon^{-1}
$-semiconcave and $\varepsilon^{-1}$-semiconvex. Therefore $(f^{\varepsilon
})_{\varepsilon}$ is $C^{1,1}$ in $\mathcal{H}$ with $\mathrm{Lip}
(\nabla(f^{\varepsilon})_{\varepsilon})\leq\varepsilon^{-1}. $ Since
$(f^{\varepsilon})_{\varepsilon}-\frac{1}{2\varepsilon}|x|^{2}$ is concave for
every $0<\varepsilon<M^{-1},$ sending $\varepsilon\nearrow M^{-1},$ we
conclude that $F$ is $M$-semiconcave. Since $F$ is also convex, the previous
arguments allow to conclude that $F$ is $C^{1,1}$ in $\mathcal{H}$ with
$\mathrm{Lip}(\nabla F)\leq M.$
\end{proof}

\begin{remark} In~\cite{ll86}, the $C^{1,1}$ regularization result is stated for $(f^\e)_\delta$ with $0<\delta <\e.$ To obtain an extension
in our framework, we need to take $\delta =\e$. The fact that we have been able to increase the value of $\delta$ and take it equal to $\e$ 
without losing the $C^{1,1}$ regularity relies strongly on the convexity of $f$. Since convexity is preserved under the sup and inf-convolution operations,
the inf-convolution does not affect the semiconvexity property of $f^{\varepsilon}$ even for $\delta=\e.$
For the same reason, one cannot reverse the above operations: more precisely, the function
$(f_{\varepsilon})^{\varepsilon}=f$ would not be semiconcave.
\end{remark}


\section{$C^{1,1}$ extension of $1$-fields: explicit formulae}

Let us now apply the previous result to obtain a general $C^{1,1}$-extension
in the Glaeser-Whitney problem.

\begin{theorem} [C$^{1,1}$-Glaeser-Whitney almost minimal extension]\label{thm-main-2} Let $S$
be a nonempty subset of a Hilbert space $\mathcal{H}$ and $(\alpha(s),v(s))_{s\in S}$ be a $1$-Taylor field on $S$ satisfying~\eqref{cond-GW}.
Then, the function
$$
G(x)=F(x)-\frac{1}{2}\overline{\mu}|x|^2
$$
is an explicit $C^{1,1}$ extension of the $1$-Taylor field $(\alpha,v)$, provided that $F$ is 
the convex extension of the $1$-Taylor field $(\tilde{\alpha},\tilde{v})$ where, for all $s\in S$,
$$
\tilde{\alpha}(s):=\alpha(s)+\frac{1}{2}\overline{\mu}|s|^{2},
\quad
\tilde{v}(s):=v(s)+\overline{\mu}s
$$
and
$$
  \overline{\mu}:= 2K_1+K_2+\sqrt{(2K_1+K_2)^2+K_2^2},
  \quad\text{$K_1,K_2$ given by~\eqref{cond-GW}.}
$$

Moreover, the extension $G$ is almost minimal, i.e.,
$$
\Gamma^{1}(S,(\alpha,v))\leq \Gamma^{1}(\mathcal{H},(G,\nabla G))=\mathrm{Lip}(\nabla G)
\leq\left(\frac{5+\sqrt{29}}{2}\right)\Gamma^{1}(S,(\alpha,v)).
$$
\end{theorem}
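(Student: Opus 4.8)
The plan is to reduce the Glaeser--Whitney problem to the convex case treated in Theorem~\ref{thm-main-1}. Subtracting a multiple of $|x|^{2}$ is the standard device for turning a field with controlled semiconvexity defect into a convex one, and $\overline{\mu}$ is precisely the threshold at which the tilted field $(\tilde{\alpha},\tilde{v})$ becomes admissible for Theorem~\ref{thm-main-1} with a constant $M$ that balances the semiconvexity and semiconcavity moduli of the resulting $G$. So the first step is to verify that $(\tilde{\alpha},\tilde{v})$ satisfies~\eqref{form-azagra} with $M=2\overline{\mu}$. Using $\tfrac12(|s_{2}|^{2}-|s_{1}|^{2})-\langle s_{1},s_{2}-s_{1}\rangle=\tfrac12|s_{1}-s_{2}|^{2}$ one gets
\begin{align*}
\tilde{\alpha}(s_{2})-\tilde{\alpha}(s_{1})-\langle\tilde{v}(s_{1}),s_{2}-s_{1}\rangle
&=\alpha(s_{2})-\alpha(s_{1})-\langle v(s_{1}),s_{2}-s_{1}\rangle+\tfrac{\overline{\mu}}{2}|s_{1}-s_{2}|^{2}\\
&\ge\bigl(\tfrac{\overline{\mu}}{2}-K_{1}\bigr)|s_{1}-s_{2}|^{2}
\end{align*}
by the first line of~\eqref{cond-GW}, while $|\tilde{v}(s_{1})-\tilde{v}(s_{2})|=|(v(s_{1})-v(s_{2}))+\overline{\mu}(s_{1}-s_{2})|\le(K_{2}+\overline{\mu})|s_{1}-s_{2}|$ by the second. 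Hence~\eqref{form-azagra} holds with $M=2\overline{\mu}$ provided $(K_{2}+\overline{\mu})^{2}\le 2\overline{\mu}(\overline{\mu}-2K_{1})$, which is exactly the identity $\overline{\mu}^{2}=(4K_{1}+2K_{2})\overline{\mu}+K_{2}^{2}$ satisfied by the announced $\overline{\mu}$ (this also forces $\overline{\mu}>2K_{1}$; the case $K_{1}=K_{2}=0$, i.e.\ an affine field, is trivial and treated separately).

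Applying Theorem~\ref{thm-main-1} to $(\tilde{\alpha},\tilde{v})$ with $M=2\overline{\mu}$ produces a convex $C^{1,1}$ function $F$ with $F|_{S}=\tilde{\alpha}$, $\nabla F|_{S}=\tilde{v}$ and $\mathrm{Lip}(\nabla F)\le 2\overline{\mu}$. Then $G=F-\tfrac12\overline{\mu}|x|^{2}$ satisfies $G|_{S}=\alpha$ and $\nabla G|_{S}=v$, so it extends the field; moreover $G+\tfrac12\overline{\mu}|x|^{2}=F$ is convex, so $G$ is $\overline{\mu}$-semiconvex, while $F$ being $2\overline{\mu}$-semiconcave means $G-\tfrac12\overline{\mu}|x|^{2}=F-\overline{\mu}|x|^{2}$ is concave, so $G$ is $\overline{\mu}$-semiconcave. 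By the fact recalled in the Introduction, $G$ is $C^{1,1}$ with $\mathrm{Lip}(\nabla G)\le\overline{\mu}$.

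For the two-sided estimate, the left inequality and the middle equality are immediate: since $G$ extends $(\alpha,v)$, the quantity defining $\Gamma^{1}(S,(\alpha,v))$ is the same one defining $\Gamma^{1}(\mathcal{H},(G,\nabla G))$ but with the supremum over $S\times S$ rather than $\mathcal{H}\times\mathcal{H}$, whence $\Gamma^{1}(S,(\alpha,v))\le\Gamma^{1}(\mathcal{H},(G,\nabla G))$, and the latter equals $\mathrm{Lip}(\nabla G)$ by~\cite[Proposition~2.4]{legruyer09}. For the right inequality, set $\Gamma:=\Gamma^{1}(S,(\alpha,v))$. From~\eqref{gamma1}, for all $s_{1}\ne s_{2}$ one has $|B_{s_{1}s_{2}}|\le\sqrt{A_{s_{1}s_{2}}^{2}+B_{s_{1}s_{2}}^{2}}\le\Gamma$ and $2|A_{s_{1}s_{2}}|\le|A_{s_{1}s_{2}}|+\sqrt{A_{s_{1}s_{2}}^{2}+B_{s_{1}s_{2}}^{2}}\le\Gamma$, hence $K_{2}\le\Gamma$ and $\sup_{s_{1}\ne s_{2}}|A_{s_{1}s_{2}}|\le\Gamma/2$. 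A short computation gives $\dfrac{2\bigl(\alpha(s_{2})-\alpha(s_{1})-\langle v(s_{1}),s_{2}-s_{1}\rangle\bigr)}{|s_{1}-s_{2}|^{2}}=-\Bigl(A_{s_{1}s_{2}}+\bigl\langle B_{s_{1}s_{2}},\tfrac{s_{2}-s_{1}}{|s_{2}-s_{1}|}\bigr\rangle\Bigr)$, so $2K_{1}\le\sup_{s_{1}\ne s_{2}}|A_{s_{1}s_{2}}|+K_{2}\le\tfrac32\Gamma$ and therefore $2K_{1}+K_{2}\le\tfrac52\Gamma$. Plugging this and $K_{2}\le\Gamma$ into $\overline{\mu}^{2}=2(2K_{1}+K_{2})\overline{\mu}+K_{2}^{2}$ gives $\overline{\mu}^{2}\le 5\Gamma\,\overline{\mu}+\Gamma^{2}$, i.e.\ $\overline{\mu}\le\tfrac{5+\sqrt{29}}{2}\,\Gamma$; combined with $\mathrm{Lip}(\nabla G)\le\overline{\mu}$ this closes the chain of inequalities.

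The work here is not analytic but algebraic. The subtle points are (i) recognising that the quadratic defining $\overline{\mu}$ is exactly what makes~\eqref{form-azagra} hold with the specific value $M=2\overline{\mu}$ that equalises the semiconvexity and semiconcavity moduli of $G$ --- a larger $M$ still yields an extension but a worse Lipschitz constant, a smaller one violates~\eqref{form-azagra} --- and (ii) carrying out the passage between the Glaeser--Whitney constants $K_{1},K_{2}$ of~\eqref{cond-GW} and Le Gruyer's functional $\Gamma^{1}$ of~\eqref{gamma1}, which is what upgrades the a priori bound $\mathrm{Lip}(\nabla G)\le\overline{\mu}$ to the almost-minimality factor $\tfrac{5+\sqrt{29}}{2}$. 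All the $C^{1,1}$-regularity has already been done inside Theorem~\ref{thm-main-1}.
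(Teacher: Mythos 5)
Your proof is correct and follows essentially the same route as the paper: tilt the field by $\frac{\overline{\mu}}{2}|x|^{2}$, apply Theorem~\ref{thm-main-1}, untilt, and balance the semiconvexity and semiconcavity moduli of $G$. The only cosmetic differences are that the paper verifies \eqref{form-azagra} for a general $\mu>2K_{1}$ with $M=(\mu+K_{2})^{2}(\mu-2K_{1})^{-1}$ and obtains $\overline{\mu}$ by minimizing $\max\{\mu,\,M-\mu\}$, whereas you substitute the optimal $\overline{\mu}$ directly via its defining quadratic identity, and that the paper delegates the bound $2K_{1}+K_{2}\leq\frac{5}{2}\,\Gamma^{1}(S,(\alpha,v))$ to Lemma~\ref{technique_cst} in the appendix, which you re-derive inline.
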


\begin{proof}[Proof of Theorem~\ref{thm-main-2}]
We check that for every $\mu>2K_{1}$ the 1-Taylor field $$(\tilde{\alpha}(s), \tilde{v}(s)):=(\alpha(s)+\frac{\mu}{2}|s|^{2}, v(s)+\,\mu s)$$
satisfies~\eqref{form-azagra} with $M=(\mu +K_{2})^{2}(\mu-2K_1)^{-1}$. Indeed, for any $s_{1},s_{2}\in S$ we obtain, using~\eqref{cond-GW},
\begin{eqnarray*}
  && \tilde{\alpha}(s_{2})- \tilde{\alpha}(s_{1})- \,\langle\tilde{v}(s_{1}),s_{2}-s_{1}\rangle\\
&=& \alpha(s_{2})-\alpha(s_{1})-\langle v(s_{1}),s_{2}-s_{1}\rangle + \frac{\mu}{2}\,\left( |s_{2}|^{2}-|s_{1}|^{2}-2 \langle s_{1},s_{2}-s_{1}\rangle\right) \\
& \geq &  \left( \frac{\mu-2K_{1}}{2}\right) \;|s_{1}-s_{2}|^{2}
\geq \frac{1}{2}(\frac{\mu-2K_{1}}{(\mu +K_{2})^{2}})\;|\tilde{v}(s_{1})-\tilde{v}(s_{2})|^{2},
\end{eqnarray*}
since ${\rm Lip}(\tilde{v})\leq {\rm Lip}(v)+\mu=K_2+\mu$.
Thus, the function $F$ given by Theorem~\ref{thm-main-1} is a $C^{1,1}$-convex extension of $(\tilde{\alpha}(s), \tilde{v}(s))$
satisfying $F|_{S}=\tilde{\alpha}$, $\nabla F|_{S}=\tilde{v}$ and
$ \mathrm{Lip}(\nabla F)\leq (\mu +K_{2})^{2}(\mu-2K_{1})^{-1}.$
Therefore
$G(x)=F(x)-\frac{\mu}{2}|x|^2$
satisfies $G|_{S}=\alpha,$ $\nabla G|_{S}=v.$

Moreover, $G$ is
$\left(\frac{(\mu +K_{2})^{2}}{\mu-2K_{1}}-\mu\right)$-semiconcave and $\mu$-semiconvex (since $F$ is convex). We deduce 
$$
\mathrm{Lip}(\nabla G)\leq\max\left\{  \mu, \frac{(\mu +K_{2})^{2}}{\mu-2K_{1}}- \mu\right\}  .
$$
Minimizing the above quantity on $\mu\in (2 K_{1},+\infty )$ yields
\begin{eqnarray*}
\mathrm{Lip}(\nabla G)&\leq&
\min_{\mu\in (2 K_{1},\infty )}\max\left\{  \mu, \frac{(\mu +K_{2})^{2}}{\mu-2K_{1}}- \mu\right\}
\\
&=& \overline{\mu}:=   2K_1 + K_2 + \sqrt{(2K_1+K_2)^2+K_2^2}. 
\end{eqnarray*}
By Lemma~\ref{technique_cst} (Appendix), we have
$\max\, \{K_2, 4K_1-2K_2\} \leq  \Gamma^{1}(S,(\alpha,v))$. It follows that
$\mathrm{Lip}(\nabla G)\leq \left(\frac{5+\sqrt{29}}{2}\right)\Gamma^{1}(S,(\alpha,v))$. 
By~\cite[Proposition 2.4]{legruyer09}, we have $\mathrm{Lip}(\nabla G)= \Gamma^{1}(\mathcal{H},(G,\nabla G)).$
The result follows.
\end{proof}


\section{Limitations of the sup-inf approach}

The main result (Theorem~\ref{thm-main-2}) is heavily based on the explicit construction of a $C^{1,1}$-convex extension
of a 1-Taylor field $(\alpha,v)$ satisfying \eqref{form-azagra}, which in turn, is based on the sup-inf convolution approach.
The reader might wonder whether our approach can be adapted to include cases where less regularity is required, as for instance
$C^{1,\theta}$-extensions, that is, extensions to a $C^{1}$-function whose derivative has a
H\"older modulus of continuity with exponent $\theta\in(0,1)$.  The existence of such \emph{convex} extensions (and even $C^{1,\omega}$ convex extensions 
with a general modulus of continuity $\omega$) was established in finite dimensions in Azagra-Mudarra~\cite{am17bis} by means of involved arguments. Indeed, it would be natural to endeavor an adaptation of formula~\eqref{ext-C11-f} to treat the problem of $C^{1,\theta}$-convex extensions, for $0<\theta<1$.  According to~\cite{am17bis}, the adequate condition, analogous to~\eqref{form-azagra}, is that the 1-Taylor field has to satisfy, for some $M>0$,
 \begin{eqnarray}\label{form-azagra-holder}
\alpha(s_{2})\geq\alpha(s_{1})+\langle v(s_{1}),s_{2}-s_{1}\rangle
+\frac{\theta}{(1+\theta)M^{1/\theta}}|v(s_1)-v(s_2)|^{1+\frac{1}{\theta}}.
\end{eqnarray}
Unfortunately, the technique developed in Section~\ref{ss-2} is specific to the $C^{1,1}$-regularity and cannot be easily adapted to this more general case. Let us briefly explain the reason. \smallskip

\noindent Considering the suitable sup and inf-convolutions
\begin{eqnarray*}
f^\e(x)= \mathop{\rm sup}_{y\in \mathcal{H}}\left \{f(y)-\frac{|y-x|^{1+\theta}}{(1+\theta)\e^\theta}\right \},
\quad
f_\e(x)= \mathop{\rm inf}_{y\in \mathcal{H}}\left \{f(y)+\frac{|y-x|^{1+\theta}}{(1+\theta)\e^\theta}\right \},
\end{eqnarray*}
all of the arguments of the proof of Theorem~\ref{thm-main-1} go through except~\eqref{form-cruciale},
which fails to hold in this general case. More precisely, the convex extension $f$ defined by~\eqref{fcvx} satisfies
\begin{eqnarray}\label{f-line-holder}
l(x)\leq f(x)\leq q(x) \qquad \text{for all $x\in \mathcal{H}$ and $s\in S,$}
\end{eqnarray}
with equalities for $x=s$, where
\begin{eqnarray}
&& l(x):= \alpha(s)+ \langle v(s), x-s\rangle \label{linear-theta}\\
&& q(x):= \alpha(s)+ \langle v(s), x-s\rangle \label{parab-theta}
+\frac{M}{1+\theta}|x-s|^{1+\theta}.
\end{eqnarray}
Therefore for every $\e>0$ such that $M\e^{\theta} <1,$ we have
\begin{eqnarray*}
l(x)\leq (f^\e)_\e(x)\leq (q^\e)_\e(x)\,.
\end{eqnarray*}
Nonetheless, we may now possibly have
\begin{eqnarray}\label{contr-theta}
  (q^\e)_\e(s)\not= q(s),
\end{eqnarray}
yielding that $(f^\e)_\e$ is a  $C^{1,\theta}$-convex function but may differ from $f$ on $S$, hence it is not an extension of the latter. 
Let us underline that the problem arises even in dimension 1 and even for small $\e.$  In particular, the sup-convolution $q^\e$ may develop singularities
for arbitrary small $\e$ so that $q^\e$ is not anymore in the same class as $q$, contrary to the quadratic case (see~\eqref{sup-conv-quad}).

\begin{remark}   Recalling~\cite{ll86} that $u(x,t):=q^t(x)$ is a viscosity solution
  to the Hamilton-Jacobi equation
  $\partial_t u-\frac{\theta}{1+\theta}|\nabla u|^{1+\frac{1}{\theta}}=0$
  in $\mathcal{H}\times (0,\e),$ we obtain an explicit example where the solutions develop singularities
  instantaneously, even when starting with a $C^{1,\theta}$ initial condition $u(x,0)=q(x).$ See~\cite{bcjs99} for related comments.
\end{remark}

\begin{proof}[Sketch of proof of the Claim~\eqref{contr-theta}]
Without loss of generality we may assume that $\alpha(s)=0$ and $s=0$ in~\eqref{linear-theta}--\eqref{parab-theta}.
Fix $v\not= 0.$ Assume by contradiction that $(q^\e)_\e(0)=q(0)=l(0)=0.$ Then, since
$q$ is a $C^{1,\theta}$ function, necessarily, $\nabla (q^\e)_\e(0)=\nabla l(0)=v.$
Using that $y\mapsto q^\e(y)+\frac{|y|^{1+\theta}}{(1+\theta)\e^\theta}$
is a strictly convex function achieving a unique minimum $\overline{y}$ in $\mathcal{H},$
we obtain that
\begin{eqnarray*}
 && (q^\e)_\e(0) = q^\e (\overline{y}) +\frac{|\overline{y}|^{1+\theta}}{(1+\theta)\e^\theta}
  = \mathop{\rm sup}_{y\in \mathcal{H}} \left \{ q(y)-\frac{|y-\overline{y}|^{1+\theta}}{(1+\theta)\e^\theta}\right \}
  +\frac{|\overline{y}|^{1+\theta}}{(1+\theta)\e^\theta}\,=\,0,
\\
&&  \nabla (q^\e)_\e(0)= -\frac{\overline{y}|\overline{y}|^{\theta -1}}{\e^\theta}=v,
\end{eqnarray*}
yielding $\overline{y}= -\e v |v|^{\frac{1}{\theta}-1}\not= 0.$
To prove the claim, it is enough to find some $y\in\mathcal{H}$ such that
\begin{eqnarray*}
\varphi(y):=q(y)-\frac{|y-\overline{y}|^{1+\theta}}{(1+\theta)\e^\theta}
  +\frac{|\overline{y}|^{1+\theta}}{(1+\theta)\e^\theta} >0.
\end{eqnarray*}
In particular, let us seek for $y=\lambda \bar y$ where $\lambda\in\R$ is small. (Notice that this guarantees that
the computation would also hold when $\mathcal{H}$ is one dimensional.) We have
\begin{eqnarray*}
   (q^\e)_\e(0) &\geq& \varphi(y)= \frac{|\overline{y}|^{1+\theta}}{(1+\theta)\e^\theta}
  \left( M\e^\theta |\lambda|^{1+\theta} -(1+\theta)\lambda-|\lambda -1|^{1+\theta}+1\right)\\
  &=&  \frac{|\overline{y}|^{1+\theta}}{(1+\theta)\e^\theta}
  \left(  M\e^\theta |\lambda|^{1+\theta} -\frac{1}{2}(1+\theta)\theta\lambda^2 +o(\lambda^2) \right) >0=q(0),
\end{eqnarray*}
at least for small $\lambda >0$. 
\end{proof}

\section{Appendix}

\subsection{Independence of the Glaeser-Whitney constants in~\eqref{cond-GW}}

It is worth-noticing that there is no link between $K_1$ and $K_2$ in~\eqref{cond-GW}.
Each of these constants can be 0 while the other one can be very large, as it is shown in the following examples.

\begin{example}
Let~$\mathcal{H}=\R$, $S=\{0,1\}$ and $\alpha(0)=A>0$, $v(0)=0$, $\alpha(1)=0$, $v(1)=0$.
Then it follows that $K_1=A$ and $K_2=0$.  
\end{example}

\begin{example}
Let~$\mathcal{H}=\R^2$, $S=\{s_1,s_2\}$ with $s_1=(-1,0)$ and $s_2=(1,0)$. Set
$\alpha(s_1)=\alpha(s_2)=0$, $v(s_1)=(0,-A)$ and $v(s_2)=(0,A)$, for $A>0$. Since $\alpha(s_1)-\alpha(s_2)=0$ and $s_1-s_2\perp v(s_i)$, $i=1,2$,
we have $K_1=0$. Obviously $K_2=A$.  
\end{example}

\subsection{Inequality estimations between $\Gamma^{1}(S,(\alpha,v))$, $K_1$ and $K_2$.}

The following result has been used in the last part of the proof of Theorem~\ref{thm-main-2}

\begin{lemma}\label{technique_cst}
Let $\{(\alpha(s),v(s))\}_{s\in S}$ be a $1$-Taylor field satisfying the Glaeser-Whitney conditions~\eqref{cond-GW}. Then we have:\smallskip \\
(i) $K_2\leq  \Gamma^{1}(S,(\alpha,v))$; \smallskip \\
(ii) $4K_1-2K_2\leq  \Gamma^{1}(S,(\alpha,v))$.
\end{lemma}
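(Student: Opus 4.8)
The plan is to prove both inequalities directly from the definition of $\Gamma^1$, by choosing appropriate pairs $s_1,s_2\in S$ that nearly realize the suprema defining $K_1$ and $K_2$, and estimating the quantity $\sqrt{A_{s_1s_2}^2+B_{s_1s_2}^2}+|A_{s_1s_2}|$ from below.

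For part (i), the key observation is that $\Gamma^1(S,(\alpha,v))\geq \sqrt{A_{s_1s_2}^2+B_{s_1s_2}^2}+|A_{s_1s_2}|\geq |B_{s_1s_2}| = \dfrac{|v(s_1)-v(s_2)|}{|s_1-s_2|}$ for every pair $s_1\neq s_2$. Taking the supremum over all such pairs immediately gives $\Gamma^1(S,(\alpha,v))\geq K_2$. This is the easy half.

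For part (ii), I would fix a pair $s_1\neq s_2$ and relate the numerator of $A_{s_1s_2}$ to the numerator of $K_1$. Writing $d=|s_1-s_2|$, observe that $2(\alpha(s_1)-\alpha(s_2))+\langle v(s_1)+v(s_2),s_2-s_1\rangle = -\big[2(\alpha(s_2)-\alpha(s_1)-\langle v(s_1),s_2-s_1\rangle)\big] + \langle v(s_2)-v(s_1), s_2-s_1\rangle$. Hence $|A_{s_1s_2}|\geq \dfrac{2|\alpha(s_2)-\alpha(s_1)-\langle v(s_1),s_2-s_1\rangle|}{d^2} - \dfrac{|v(s_2)-v(s_1)|}{d}$ by the triangle inequality and Cauchy-Schwarz, i.e. $|A_{s_1s_2}|\geq \dfrac{2|\alpha(s_2)-\alpha(s_1)-\langle v(s_1),s_2-s_1\rangle|}{d^2} - |B_{s_1s_2}|$. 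Since $\Gamma^1(S,(\alpha,v))\geq |A_{s_1s_2}| + |B_{s_1s_2}|$ (using $\sqrt{A^2+B^2}\geq |B|$), we get $\Gamma^1(S,(\alpha,v))\geq \dfrac{2|\alpha(s_2)-\alpha(s_1)-\langle v(s_1),s_2-s_1\rangle|}{d^2}$. Taking the supremum over all pairs, the right-hand side approaches $2K_1$, so $\Gamma^1(S,(\alpha,v))\geq 2K_1$, which is even stronger than $4K_1-2K_2$ when combined with (i) — but to land exactly on the stated inequality I would instead combine the bound $\Gamma^1\geq |A_{s_1s_2}|+|B_{s_1s_2}|$ more carefully: from $|A_{s_1s_2}|\geq \dfrac{2|\cdots|}{d^2}-|B_{s_1s_2}|$ we get $|A_{s_1s_2}|+|B_{s_1s_2}|\geq \dfrac{2|\cdots|}{d^2}$, and separately $|B_{s_1s_2}|\leq K_2$, so actually one can also write $\Gamma^1\geq 2|A_{s_1s_2}| + |B_{s_1s_2}| - |B_{s_1s_2}|$; the cleanest route to $4K_1-2K_2$ is to note $\Gamma^1\geq \sqrt{A^2+B^2}+|A|\geq 2|A|$, hence $\Gamma^1\geq 2|A_{s_1s_2}|\geq \dfrac{4|\alpha(s_2)-\alpha(s_1)-\langle v(s_1),s_2-s_1\rangle|}{d^2} - 2|B_{s_1s_2}| \geq \dfrac{4|\alpha(s_2)-\alpha(s_1)-\langle v(s_1),s_2-s_1\rangle|}{d^2} - 2K_2$, and then take the supremum over pairs to obtain $\Gamma^1\geq 4K_1 - 2K_2$.

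The main obstacle, such as it is, is purely bookkeeping: one must be careful with which pair $(s_1,s_2)$ is used to approach $K_1$ versus $K_2$ (they need not be the same, so the estimate must hold pairwise before passing to suprema), and with the asymmetry of the numerator of $K_1$ in $s_1,s_2$ — swapping the roles of $s_1$ and $s_2$ changes the expression, so one should simply take the supremum over ordered pairs and use that $|A_{s_1s_2}|$ is symmetric while the $K_1$-numerator is not. No deep idea is required beyond the elementary inequalities $\sqrt{a^2+b^2}+|a|\geq 2|a|$ and $\sqrt{a^2+b^2}+|a|\geq |b|$ together with the triangle inequality and Cauchy--Schwarz.
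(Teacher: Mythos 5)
Your proposal is correct and follows essentially the same route as the paper: part (i) from $\sqrt{A_{s_1s_2}^2+B_{s_1s_2}^2}+|A_{s_1s_2}|\geq |B_{s_1s_2}|$, and part (ii) from the decomposition of $-A_{s_1s_2}$ into $\tfrac{2}{|s_1-s_2|^2}\bigl(\alpha(s_2)-\alpha(s_1)-\langle v(s_1),s_2-s_1\rangle\bigr)$ plus the term $\tfrac{1}{|s_1-s_2|^2}\langle v(s_1)-v(s_2),s_2-s_1\rangle$, followed by the triangle and Cauchy--Schwarz inequalities, the bound $\Gamma^1\geq 2|A_{s_1s_2}|$, and a supremum over pairs. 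The only blemish is the discarded aside claiming that $\Gamma^1\geq 2K_1$ combined with (i) is ``even stronger'' than (ii) --- this fails when $K_1>K_2$ --- but since you abandon that route and finish with the correct estimate, the proof stands.
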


\begin{proof}[Proof of Lemma~\ref{technique_cst}]
Recalling~\eqref{gamma1}, we have
\begin{eqnarray*}
  && \Gamma^{1}(S,(\alpha,v))\geq  \mathop{\rm sup}_{s_1\not= s_2} |B_{s_1s_2}|=K_2,
\end{eqnarray*}
which proves (i).\medskip

\noindent To establish (ii), we set
$$
K_1= \mathop{\rm sup}_{s_1\not= s_2} \frac{|k_1^{s_1s_2}|}{|s_1-s_2|^2}
\quad \text{with} \quad
k_1^{s_1s_2}=\alpha(s_2)-\alpha(s_1)-\langle v(s_1), s_2-s_1\rangle,
$$
and we deduce that
\begin{eqnarray*}
|-A_{s_1s_2}|
&=&
\left| \frac{2 k_1^{s_1s_2}}{|s_1-s_2|^2}+ \frac{\langle v(s_1)-v(s_2), s_2-s_1\rangle}{|s_1-s_2|^2}\right|
\\
&\geq &
\frac{2 |k_1^{s_1s_2}|}{|s_1-s_2|^2}-  \frac{|v(s_1)-v(s_2)|}{|s_1-s_2|}\\
&\geq &
\frac{2 |k_1^{s_1s_2}|}{|s_1-s_2|^2}- K_2.
\end{eqnarray*}
It follows
\begin{eqnarray*}
  && \Gamma^{1}(S,(\alpha,v))\geq  \mathop{\rm sup}_{s_1\not= s_2} 2|A_{s_1s_2}|
  \geq \mathop{\rm sup}_{s_1\not= s_2}\frac{4 |k_1^{s_1s_2}|}{|s_1-s_2|^2}- 2K_2 = 4K_1-2K_2.
\end{eqnarray*}
This completes the proof. \end{proof}

\vspace{0.3cm}

\noindent\textbf{Acknowledgement.} This work was partially supported by the Centre Henri Lebesgue ANR-11-LABX-0020-01.
Major part of this work has been done during a research visit of the first author to INSA Rennes. This author
is indebted to his hosts for hospitality. Research of Aris Daniilidis was
partially supported by BASAL PFB-03, FONDECYT grant 1171854 (Chile) and MTM2014-59179-C2-1-P
grant of MINECO (Spain) and ERDF (EU).


\begin{thebibliography}{10}

\bibitem{af11}
M.~Aschenbrenner and A.~Fischer.
\newblock Definable versions of theorems by {K}irszbraun and {H}elly.
\newblock {\em Proc. Lond. Math. Soc. (3)}, 102(3):468--502, 2011.

\bibitem{am17}
D.~Azagra and C.~Mudarra.
\newblock An {E}xtension {T}heorem for convex functions of class {$C^{1,1}$} on
  {H}ilbert spaces.
\newblock {\em J. Math. Anal. Appl.}, 446(2):1167--1182, 2017.

\bibitem{am17bis}
D.~Azagra and C.~Mudarra.
\newblock {W}hitney extension theorems for convex functions of the classes
  {$C^1$} and {$C^{1, \omega}$}.
\newblock {\em Proc. Lond. Math. Soc.}, 114:133--158, 2017.

\bibitem{bcjs99}
E.~N. Barron, P.~Cannarsa, R.~Jensen, and C.~Sinestrari.
\newblock Regularity of {H}amilton-{J}acobi equations when forward is backward.
\newblock {\em Indiana Univ. Math. J.}, 48(2):385--409, 1999.

\bibitem{bw10}
H.~Bauschke and X.~Wang.
\newblock Firmly nonexpansive and {K}irszbraun-{V}alentine extensions: a
  constructive approach via monotone operator theory.
\newblock In {\em Nonlinear analysis and optimization {I}. {N}onlinear
  analysis}, volume 513 of {\em Contemp. Math.}, pages 55--64. Amer. Math.
  Soc., Providence, RI, 2010.

\bibitem{BH09}
J.~M. Borwein and C.~H. Hamilton.
\newblock Symbolic Fenchel conjugation.
\newblock {\em Mathematical Programming}, 116(1):17--35, 2009.

\bibitem{bs01}
Y.~Brudnyi and P.~Shvartsman.
\newblock Whitney's extension problem for multivariate
  {$C^{1,\omega}$}-functions.
\newblock {\em Trans. Amer. Math. Soc.}, 353(6):2487--2512, 2001.

\bibitem{cs04}
P.~Cannarsa and C.~Sinestrari.
\newblock {\em Semiconcave functions, {H}amilton-{J}acobi equations, and
  optimal control}.
\newblock Progress in Nonlinear Differential Equations and their Applications,
  58. Birkh\"auser Boston Inc., Boston, MA, 2004.

\bibitem{fefferman05}
C.~Fefferman.
\newblock A sharp form of {W}hitney's extension theorem.
\newblock {\em Ann. Math. (2)}, 161(1):509--577, 2005.

\bibitem{fil17}
C.~Fefferman, A.~Israel, and G.~Luli.
\newblock Interpolation of data by smooth nonnegative functions.
\newblock {\em Rev. Mat. Iberoam.}, 33(1):305--324, 2017.

\bibitem{fil17bis}
C.~Fefferman, A.~Israel, and G.~K. Luli.
\newblock Finiteness principles for smooth selection.
\newblock {\em Geom. Funct. Anal.}, 26(2):422--477, 2016.

\bibitem{glaeser58}
G.~Glaeser.
\newblock \'{E}tude de quelques alg\`ebres tayloriennes.
\newblock {\em J. Anal. Math.}, 6:1--124; erratum, insert to 6 (1958), no. 2,
  1958.

\bibitem{hhm17}
A.~Herbert-Voss, M.~J. Hirn, and F.~McCollum.
\newblock Computing minimal interpolants in {$C^{1,1}(\Bbb{R}^d)$}.
\newblock {\em Rev. Mat. Iberoam.}, 33(1):29--66, 2017.

\bibitem{ll86}
J.-M. Lasry and P.-L. Lions.
\newblock A remark on regularization in {H}ilbert spaces.
\newblock {\em Israel J. Math.}, 55(3):257--266, 1986.

\bibitem{legruyer09}
E.~Le~Gruyer.
\newblock Minimal {L}ipschitz extensions to differentiable functions defined on
  a {H}ilbert space.
\newblock {\em Geom. Funct. Anal.}, 19(4):1101--1118, 2009.

\bibitem{lgp15}
E.~Le~Gruyer and T.~V. Phan.
\newblock Sup-inf explicit formulas for minimal {L}ipschitz extensions for
  1-fields on {$\Bbb{R}^n$}.
\newblock {\em J. Math. Anal. Appl.}, 424(2):1161--1185, 2015.

\bibitem{clcs97}
C.~Lemar{\'e}chal and C.~Sagastiz{\'a}bal.
\newblock Practical aspects of the moreau--yosida regularization: Theoretical
  preliminaries.
\newblock {\em SIAM J. Optim.}, 7(2):367--385, 1997.

\bibitem{yL06}
Y.~Lucet.
\newblock Fast {M}oreau envelope computation. {I}. {N}umerical algorithms.
\newblock {\em Numer. Algorithms}, 43(3):235--249 (2007), 2006.

\bibitem{mcshane34}
E.~J. McShane.
\newblock Extension of range of functions.
\newblock {\em Bull. Amer. Math. Soc.}, 40(12):837--842, 1934.

\bibitem{Valentine45}
F.~A. Valentine.
\newblock A {L}ipschitz condition preserving extension for a vector function.
\newblock {\em Amer. J. Math.}, 67:83--93, 1945.

\bibitem{wells73}
J.~C. Wells.
\newblock Differentiable functions on {B}anach spaces with {L}ipschitz
  derivatives.
\newblock {\em J. Differential Geometry}, 8:135--152, 1973.

\bibitem{whitney34}
H.~Whitney.
\newblock Analytic extensions of differentiable functions defined in closed
  sets.
\newblock {\em Trans. Amer. Math. Soc.}, 36(1):63--89, 1934.

\bibitem{zobin98}
N.~Zobin.
\newblock Whitney's problem on extendability of functions and an intrinsic
  metric.
\newblock {\em Adv. Math.}, 133(1):96--132, 1998.

\end{thebibliography}

\vspace{1cm}

\noindent Aris Daniilidis
\smallskip

\noindent DIM--CMM, UMI CNRS 2807\newline Beauchef 851 (Torre Norte, piso~5),
Universidad de Chile, Santiago de Chile.\newline
\noindent E-mail: \texttt{arisd@dim.uchile.cl} \newline\noindent
\texttt{http://www.dim.uchile.cl/\symbol{126}arisd} \smallskip

\noindent Research supported by the grants: \newline BASAL PFB-03 (Chile),
FONDECYT 1171854 (Chile) and MTM2014-59179-C2-1-P (MINECO of Spain and ERDF of
EU).\newline

\bigskip

\noindent Mounir Haddou, Olivier Ley, Erwan Le Gruyer
\smallskip

\noindent IRMAR, INSA Rennes, CNRS UMR 6625 \newline
20 avenue des Buttes de Coesmes, F-35708 Rennes, France\newline
\noindent E-mail: \texttt{\{mounir.haddou, olivier.ley, erwan.le-gruyer\}\,\,@insa-rennes.fr} \newline\noindent
\texttt{http://\{haddou, ley\}.perso.math.cnrs.fr/}

\end{document}